\tikzstyle{vertex}=[circle,draw, inner sep=0pt, minimum size=5pt] 
\newcommand{\vertex}{\node[vertex]}
\def \L {\mathfrak{L}}
\def \D {\mathfrak{D}}
\def \J {\mathbb{J}}
\def \I {\mathbb{I}}
\def \J {\mathbb{J}}
\newtheorem*{theorem*}{Theorem}
\newtheorem{theorem}{Theorem}
\newtheorem{cor}[theorem]{Corollary}
\begin{document}
\title[]{Spectral properties of edge Laplacian matrix}
\author{Shivani Chauhan}
\address{Department of Mathematics, Shiv Nadar Institution of Eminence, Dadri, U.P, 201314}
\email{sc739@snu.edu.in}

\author{A. Satyanarayana Reddy}
\address{Department of Mathematics, Shiv Nadar Institution of Eminence, Dadri, U.P, 201314}
\email{satya.a@snu.edu.in}

\begin{abstract}
Let $N(X)$ be the Laplacian matrix of a directed graph obtained from the edge adjacency matrix of a graph $X.$ In this work, we study the bipartiteness property of the graph with the help of $N(X).$ We computed the spectrum of the edge Laplacian matrix for the regular graphs, the complete bipartite graphs, and the trees. Further, it is proved that given a graph $X,$ the characteristic polynomial of $N(X)$ divides the characteristic polynomial of $N(X^{\prime\prime}),$ where $X^{\prime\prime}$ denote the Kronecker double cover of $X.$

\vspace{2mm}
\noindent\textsc{2010 Mathematics Subject Classification.} 05C05, 05C50, 05C76.
\vspace{2mm}
\noindent\textsc{Keywords and phrases.} Laplacian matrix, edge adjacency matrix.
\end{abstract}

\maketitle
\section{Introduction}

The {\em Laplacian matrix} $\L(X)$ or $\L$ of a directed graph $X$ is defined as $\L(X)=\D(X)-A(X)$, where $A(X)$ is the adjacency matrix of $X$ and $\D(X)$ is the diagonal matrix of the row sums of $A(X).$ In \cite{wu2005algebraic,wu2005rayleigh}, Chai Wah Wu defined the algebraic connectivity of directed graphs. It is useful in deriving synchronization criteria for arrays of coupled dynamical systems for both constant and time-varying coupling. In this work, we are studying the Laplacian matrix of a special class of directed graphs, for that we first define the edge adjacency matrix of a graph.

There are other definitions of Laplacian matrices of directed graphs (see \cite{bapat1999generalized,chung2005laplacians}). Let $X=(V(X),E(X))$ be a graph with $|V(X)|=n, \;|E(X)|=m.$ We orient the edges of $X$ arbitrarily, label them as $e_1,e_2,\ldots,e_m$ and also $e_{m+i}=e_i^{-1},\;1\le i\le m,$ where $e_k^{-1}$ denotes the edge $e_k$
with the direction reversed. Then the {\em edge adjacency matrix} of $X$, denoted by $M(X)$ or $M$ if $X$ is clear from the context, is defined as 
 $$M_{ij}=\begin{cases}
          1 & \mbox{if $t(e_i)=s(e_j)$\; and\; $s(e_i)\ne t(e_j)$,}\\
          0 & \mbox{otherwise.}
                   \end{cases}$$
where $s(e_i)$ and $t(e_i)$ denote the starting and the terminal vertex of $e_i,$ respectively. For further information about the matrix $M$, one can refer \cite{chauhan2023double,horton2006ihara,terras2010zeta}.

The {\em edge Laplacian matrix} $N(X)$ or $N$ of a graph $X$ is defined as $N=D-M$, where $D$ is the diagonal matrix of the row sums of $M$ {\it i.e.,} the $i^{th}$ diagonal entry equals to $deg(t(e_i))-1$, and $deg(v)$ denotes the degree of the vertex $v.$  Recall, a graph is {\em strongly connected} if for every ordered pair of vertices $(v, w)$, there exists a directed path from $v$ to $w.$
In Theorem 11.10 of \cite{terras2010zeta}, it is proved that if $X$ is a connected graph that is not a cycle graph and does not contain a pendant vertex, then matrix $M$ is irreducible. Therefore the directed graph obtained from the matrix $M$ is strongly connected, hence 0 is a simple eigenvalue of $N$. The process of computation of matrix $N(C_3)$ is given in Figure \ref{fig:ex N}.
\par
 In the next section, we describe the structure of the matrix $N$ which is useful to determine the bipartiteness property of the graph. Further, the spectrum of various families of graphs is provided, in particular for regular graphs, trees, and complete bipartite graphs. Lastly, it is proved that $\phi_{N(X)}$ divides $\phi_{N(X^{\prime\prime})},$ where $\phi_A$ and $X^{\prime\prime}$ denote the characterstic polynomial of a matrix $A$ and the Kronecker product of a graph $X$ with $K_2$, respectively and $K_n$ denotes the complete graph on $n$ vertices. Recall, the {\em Kronecker product} $X_ 1\times X_2$ of graphs $X_1$ and $X_2$ is a graph such that the vertex set is $V(X_1)\times V(X_2)$, vertices $(x_1,x_2)$ and $(x_1^\prime,x_2^\prime)$ are adjacent if $x_1$ is adjacent to $x_1^\prime$ and $x_2$ is adjacent to $x_2^\prime.$ In particular given a graph $X,$ $X \times K_2$ is called the {\em Kronecker double cover} of $X.$
 From now onwards, the edge adjacency matrix of a graph is denoted by $M$ and the diagonal matrix of its rows sums by $D.$ 

\begin{figure}[h!]
    \begin{subfigure}[ht!]{0.3\textwidth}
\begin{tikzpicture}
\vertex (1) at (0,0){};
\vertex (2) at (2,0){};
\vertex (3) at (1,1){};
     \path[-]
    (1) edge (2)
    (2) edge (3)
    (1) edge (3)
    ; 
\end{tikzpicture}
\caption{$X$}
\label{fig:C_3}
\end{subfigure}
\hfill
\begin{subfigure}[ht!]{0.3\textwidth}
\tikzset{every picture/.style={line width=0.75pt}} 
\begin{tikzpicture}[x=0.75pt,y=0.75pt,yscale=-0.5,xscale=0.5]
\draw   (100,115) .. controls (100,112.24) and (102.24,110) .. (105,110) .. controls (107.76,110) and (110,112.24) .. (110,115) .. controls (110,117.76) and (107.76,120) .. (105,120) .. controls (102.24,120) and (100,117.76) .. (100,115) -- cycle ;
\draw    (100.48,120.21) -- (41.42,199.18) ;
\draw    (110,119) -- (112.48,122.84) -- (172,203.5) ;
\draw    (44.61,202.3) -- (167.5,208) ;

\draw   (34,202.3) .. controls (34,205.17) and (36.37,207.5) .. (39.3,207.5) .. controls (42.23,207.5) and (44.61,205.17) .. (44.61,202.3) .. controls (44.61,199.43) and (42.23,197.1) .. (39.3,197.1) .. controls (36.37,197.1) and (34,199.43) .. (34,202.3) -- cycle ;
\draw   (167.5,208) .. controls (167.5,210.49) and (169.51,212.5) .. (172,212.5) .. controls (174.49,212.5) and (176.5,210.49) .. (176.5,208) .. controls (176.5,205.51) and (174.49,203.5) .. (172,203.5) .. controls (169.51,203.5) and (167.5,205.51) .. (167.5,208) -- cycle ;
\draw   (126,129.5) -- (124.67,138.67) -- (115.88,135.76) ;
\draw   (151,165.5) -- (141.18,160.84) -- (140.52,171.69) ;
\draw   (63,195.5) -- (75,202.5) -- (63,209.5) ;
\draw   (128.84,211.14) -- (118.63,205.82) -- (128.49,199.88) ;
\draw   (95,142.5) -- (83.8,143.58) -- (83.41,132.33) ;
\draw   (52.61,173.37) -- (62.62,170.31) -- (65,180.5) ;

\draw (128,114) node [anchor=north west][inner sep=0.75pt]   [align=left] {$e_1$};
\draw (154,151) node [anchor=north west][inner sep=0.75pt]   [align=left] {$e_1^{-1}$};
\draw (116,179) node [anchor=north west][inner sep=0.75pt]   [align=left] {$e_2$};
\draw (59,210) node [anchor=north west][inner sep=0.75pt]   [align=left] {$e_2^{-1}$};
\draw (30,145) node [anchor=north west][inner sep=0.75pt]   [align=left] {$e_3$};
\draw (61,92) node [anchor=north west][inner sep=0.75pt]   [align=left] {$e_3^{-1}$};
\end{tikzpicture}
\caption{Oriented graph of $X$}
\label{fig:OC_3}
\end{subfigure}
\hfill
\begin{subfigure}[h!]{0.3\textwidth}
\begin{tikzpicture}
\vertex (1) at (0,0)[label=left:$e_1$]{};
\vertex (2) at (2,0)[label=right:$e_2$]{};
\vertex (3) at (1,1)[label=above:$e_3$]{};
\vertex (4) at (0,-2)[label=left:$e_1^{-1}$]{};
\vertex (5) at (2,-2)[label=right:$e_2^{-1}$]{};
\vertex (6) at (1,-1)[label=above:$e_3^{-1}$]{};
     \path[->]
    (1) edge (2)
    (2) edge (3)
    (3) edge (1)
    (5) edge (4)
    (6) edge (5)
    (4) edge (6)
    ; 
\end{tikzpicture}
\caption{Directed graph obtained from $M(X)$}
\label{fig:DM}
\end{subfigure}
\hfill
\begin{subfigure}[h!]{0.3\textwidth}
\begin{tabular}{|c|ccc|ccc|}
\hline
&$e_1$&$e_2$&$e_3$&$e_1^{-1}$&$e_2^{-1}$&$e_3^{-1}$\\
\hline
$e_1$&1 & -1 & 0 & 0 & 0 & 0\\
$e_2$&0 & 1 & -1 & 0 & 0 & 0\\
$e_3$&-1 & 0 & 1 & 0 & 0 & 0\\
\hline
$e_1^{-1}$&0 & 0 & 0 & 1 & 0 & -1\\
$e_2^{-1}$&0 & 0 & 0 & -1 & 1 & 0\\
$e_3^{-1}$&0 & 0 & 0 & 0 & -1 & 1\\  
\hline
\end{tabular}
\caption{$N(X)$}
\label{fig:N(X)}
\end{subfigure}
 \caption{}
    \label{fig:ex N}
\end{figure}
\section{Spectrum of edge Laplacian matrix}
The collection of all the eigenvalues of a matrix together with its multiplicities is known as the {\em spectrum} of that matrix. If $\lambda_1>\lambda_2>\ldots>\lambda_k$ are the distinct eigenvalues of a matrix $A,$ then the spectrum of $A$ is defined by 
$$\begin{Bmatrix}
\lambda_1 & \lambda_2 & \cdots & \lambda_k\\
m_1 & m_2 & \cdots & m_k
\end{Bmatrix},$$ where $m_i$ is the algebraic multiplicity of eigenvalue $\lambda_i$ for $1\leq i\leq k.$ The matrix $N$ has the following structure \begin{equation}\label{eqn :1}
N=D-M=\left[\begin{array}{c|c}
  P & Q \\
  \hline
  R & S
\end{array}
\right],
\end{equation}
where $P,Q,R,S$ are $m\times m$ matrices. From the structure of matrix $M,$ we have that $Q$ and $R$ are symmetric matrices. If $X$ is regular, then $P=S^T$ and $N^T=JNJ,$ where $J=\begin{pmatrix}
0 & \I_{m}\\
\I_{m} & 0
\end{pmatrix}$ and $\I_m$ denotes the identity matrix of order $m$.
The row and column sums of the matrix $N$ are zero for regular graphs which implies that the cofactors of any two elements of $N$ are equal \cite[Lemma 4.2]{bapat2010graphs}. It is interesting to note that the sum of the blocks of $N,$ $P+Q+R+S$ is the Laplacian matrix of $L(X)$, where $L(X)$ denotes the line graph of $X$.
As
    $$N=D-M=\begin{bmatrix}
D_1 & 0\\
0 & D_2
\end{bmatrix}- \begin{bmatrix}
A & B\\
C & D
\end{bmatrix},$$
the sum of the blocks of $M$ is equal to $A(L(X))$ \cite[p.7]{horton2006ihara}
and $(D_1+D_2)_{ii}=deg(t(e_i))+deg(t(e_i^{-1}))-2,$ where $1\leq i\leq m$.

It is well known from \cite{grone1990laplacian} that a graph $X$ is bipartite if and only if the Laplacian matrix and the signless Laplacian matrix of $X$ are similar. Our next result is analogous to this. 

\begin{theorem}\label{thm:similar}
A graph $X$ is bipartite if and only if $D+M$ and $D-M$ are similar, where $M$ is irreducible.
\end{theorem}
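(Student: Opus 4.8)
The plan is to mirror the classical proof that the Laplacian and signless Laplacian of $X$ are similar exactly when $X$ is bipartite, but carried out on the digraph $\Gamma$ whose adjacency matrix is $M$. First I would record two structural facts. Since the $i$-th row sum of $M$ equals $\deg(t(e_i))-1=D_{ii}$, we have $M\mathbf{1}=D\mathbf{1}$, so $(D-M)\mathbf{1}=0$ and $0$ is an eigenvalue of $N=D-M$; irreducibility of $M$ (already noted to force strong connectivity of $\Gamma$) makes this eigenvalue simple. Moreover, irreducibility rules out pendant vertices, since a pendant vertex would produce a zero row in $M$; hence every degree is at least $2$, the diagonal matrix $D$ is positive and invertible, and $W:=D^{-1}M$ is a well-defined irreducible row-stochastic matrix with $W\mathbf{1}=\mathbf{1}$.

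For the forward implication I would build the analogue of the bipartite signature. Assuming $X$ has bipartition $V=V_1\cup V_2$, define the diagonal $\pm 1$ matrix $S$ by $S_{ii}=+1$ if $t(e_i)\in V_1$ and $S_{ii}=-1$ if $t(e_i)\in V_2$. The key computation is that whenever $M_{ij}=1$ (so $t(e_i)=s(e_j)$ and $e_j\neq e_i^{-1}$) the edge $e_j$ runs between the two parts, hence $t(e_j)$ lies opposite to $s(e_j)=t(e_i)$; thus $S_{ii}$ and $S_{jj}$ have opposite signs and $S_{ii}S_{jj}=-1$. This yields $SMS=-M$, while $SDS=D$ because $S$ is diagonal with $S^2=I$. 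Therefore $S(D-M)S=D+M$, exhibiting $D-M$ and $D+M$ as similar.

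For the converse I would argue through the spectrum. If $D+M$ and $D-M$ are similar they share the same spectrum, so $0\in\mathrm{spec}(D+M)$ because $0\in\mathrm{spec}(D-M)$. Now $0\in\mathrm{spec}(D+M)$ is equivalent, after factoring out the invertible $D$, to $-1\in\mathrm{spec}(W)$, i.e.\ to the irreducible stochastic matrix $W$ having a peripheral eigenvalue at $-1$. By the Perron--Frobenius theory of irreducible nonnegative matrices, the eigenvalues of $W$ of modulus $1$ are exactly the $h$-th roots of unity, where $h$ is the index of imprimitivity (period) of $\Gamma$; since $-1$ is an $h$-th root of unity only when $h$ is even, $-1\in\mathrm{spec}(W)$ forces $h$ to be even. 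The final step identifies $h$ with a graph invariant of $X$: the period of the strongly connected digraph $\Gamma$ is the $\gcd$ of the lengths of its directed cycles, and a directed cycle in $\Gamma$ is precisely a closed non-backtracking walk in $X$. A simple odd cycle of $X$, traversed once, is such a walk of odd length, so a non-bipartite $X$ produces an odd directed cycle in $\Gamma$ and forces $h$ to be odd. Contrapositively, $h$ even implies $X$ is bipartite, which completes the converse.

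The step I expect to be the main obstacle is the converse, specifically the clean identification of ``$-1$ is an eigenvalue of $W$'' with bipartiteness of $X$: one must invoke the Perron--Frobenius description of the peripheral spectrum of an irreducible stochastic matrix and then translate the period of $\Gamma$ back into the existence of an odd closed non-backtracking walk, i.e.\ an odd cycle, in $X$. Verifying that traversing a simple odd cycle really gives a non-backtracking closed walk, so that it genuinely contributes an odd-length directed cycle to $\Gamma$, and that no parity is lost when passing between $X$ and $\Gamma$, is the delicate part; the forward direction and the two structural facts are routine.
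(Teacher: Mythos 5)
Your argument is correct. The forward direction is essentially the paper's: both exhibit the similarity by a diagonal $\pm 1$ signature; yours, with $S_{ii}$ determined by which side of the bipartition contains $t(e_i)$, works for an arbitrary edge labelling, whereas the paper first re-orients every edge from one part to the other so that the signature collapses to $\operatorname{diag}(\mathbb{I}_m,-\mathbb{I}_m)$ --- a cosmetic difference. The converse, however, is a genuinely different route. The paper works directly with a null vector $x$ of $D+M$: it picks an entry of maximum modulus, propagates the sign alternation $x_i=-x_j$ along the arcs of the digraph of $M$, and builds a bipartition of that digraph, from which the block antidiagonal form of $M$ is read off. You instead pass to the row-stochastic matrix $W=D^{-1}M$ (legitimate, since irreducibility forbids zero rows, hence pendant vertices, so $D$ is invertible), translate singularity of $D+M$ into $-1\in\operatorname{spec}(W)$, and invoke the Perron--Frobenius description of the peripheral spectrum: $-1$ occurs iff the period $h$ is even, while a non-bipartite $X$ has an odd simple cycle whose single traversal is a non-backtracking, tailless closed walk, i.e.\ an odd directed cycle forcing $h$ odd. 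Your version outsources the delicate step to the standard cyclicity theorem and makes the parity bookkeeping transparent; in particular you never need to descend from a bipartition of the edge digraph to one of $V(X)$ (a step the paper leaves implicit), only to lift an odd cycle of $X$ into the digraph. The trade-off is reliance on the full Perron--Frobenius machinery where the paper's maximum-entry argument is elementary and self-contained.
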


\begin{proof}
Suppose that $X$ is bipartite with $m$ edges. Let $\{v_1,v_2,\ldots,v_m\}$ and $\{v_1^\prime,v_2^\prime,\ldots,v_n^\prime\}$ be the vertex bipartition of $X.$ Choose an orientation in $X$ in such a way that $e_i's$ are the directed edges from $v_i$ to $v_j^\prime$ for all $1\leq i\leq m, 1\leq j \leq n$, then $M$ can be expressed in the following form
 \begin{equation}\label{eqn:bip}
 M=\left[\begin{array}{c|c}
  0 & B\\
  \hline
  C & 0
\end{array}
\right].
\end{equation} It is simple to check that $P^T(D+M)P=D-M$, where $P=\begin{bmatrix}
\I_m & 0\\
0 & -\I_m
\end{bmatrix}.$ 

Conversely, suppose that $D+M$ and $D-M$ are similar, where $M$ is irreducible. Clearly, $0$ is an eigenvalue of $D+M$. Let $x=(x_1,x_2,\ldots,x_{2m})$ be the eigenvector corresponding to the eigenvalue $0$ of $D+M$. Choose $x_i$ to be the maximum absolute entry of $x.$ As $((D+M)x)_i=0,$ we have $\sum_{j=1,i\neq j}^{2m}m_{ij}x_j+d_{ii}x_i=0$. We obtain $d_{ii}x_i=-\sum x_j,$ where the summation is over the edges to which $e_i$ is feeding and these edges are $d_{ii}$ in number. By the maximality of $x_i,$ we have $x_i=-x_j$ for all edges into which $e_i$ is feeding.
Now, we explain the construction of the vertex partition. We put the vertex $e_i$ in one partition, say $V_1$, and the vertices $e_j's$ into which $e_i$ is feeding in the partition $V_2$ and the edges are directed from $e_i$ to $e_j's.$ Again we choose the maximum absolute entry of $x$ and continue the same process. This shows $M$ has the structure given in (\ref{eqn:bip}), hence $X$ is bipartite.
\end{proof}

We shall now present the spectrum for several graph families. The spectrum of a regular graph can be easily obtained from Theorem 2.2 in \cite{glover2021some}. Let $X$ be a $k$-regular graph on $n$ vertices, then the eigenvalues of $N$ are
$$
k-1\pm 1,k-1-\left(\frac{\lambda_i \pm \sqrt{\lambda_i^2-4(k-1)}}{2}\right),(i=1,\ldots,n)$$
where $\lambda_i \in spec(A(X))$ and $k-1\pm 1$ each have multiplicity $m-n.$
Later, we computed the eigenvalues of $N$ for trees and complete bipartite graphs in Theorems \ref{thm:inteigen} and \ref{thm:cbi}, respectively. In Table \ref{tab:6vertices}, the spectrum of $N(X)$ is given, where $X$ is a tree on $6$ vertices. We state Theorem \ref{thm:nil} which is required in the proof of Theorem \ref{thm:inteigen}.

\begin{table}[h!]
    \centering
    \scalebox{0.7}{
    \begin{tabular}{|c|c|}
    \hline
    $X$ & Spectrum of $N(X)$\\
    \hline
   \begin{tikzpicture}
    \vertex (1) at (0,0) {};
    \vertex (2) at (1,0) {};
    \vertex (3) at (2,0) {};
    \vertex (4) at (3,0) {};
    \vertex (5) at (4,0) {};
    \vertex (6) at (5,0) {};
    \path[-]
    (1) edge (2)
    (2) edge (3)
    (3) edge (4)
    (4) edge (5)
    (6) edge (5)
    ;
   \end{tikzpicture} &
 $\begin{Bmatrix}
0 & 1 \\
2 & 8
\end{Bmatrix}$  
   \\
    \hline
       \begin{tikzpicture}
\vertex (6) at (-1,0) {};
    \vertex (1) at (0,0) {};
    \vertex (2) at (1,0) {};
    \vertex (3) at (2,0) {};
    \vertex (4) at (3,0.5) {};
    \vertex (5) at (3,-0.5) {};
    ;
    \path[-]
    (1) edge (2)
    (2) edge (3)
    (3) edge (4)
    (3) edge (5)
    (6) edge (1)
    ;
\end{tikzpicture}  & $\begin{Bmatrix}
0 & 1 & 2\\
3 & 4 & 3
\end{Bmatrix}$ \\
\hline
\begin{tikzpicture}[scale=0.7]  
    \vertex (1) at (1,1) {};
    \vertex (2) at (2,1) {};
    \vertex (3) at (3,2) {};
    \vertex (4) at (3,0) {};
    \vertex (5) at (4,2) {};
    \vertex (6) at (4,0) {};
    \path[-]
    (1) edge (2)
    (2) edge (3)
    (2) edge (4)
    (3) edge (5)
    (4) edge (6)
     ;
     \end{tikzpicture}&
    $\begin{Bmatrix}
0 & 1 & 2\\
3 & 4 & 3
\end{Bmatrix}$
     \\
     \hline
\begin{tikzpicture}
    \vertex (1) at (0,0) {};
    \vertex (2) at (1,0) {};
    \vertex (3) at (2,0) {};
    \vertex (4) at (3,0.5) {};
    \vertex (5) at (3,-0.5){};
    \vertex (6) at (3,0) {};
    ;
    \path[-]
    (1) edge (2)
    (2) edge (3)
    (3) edge (4)
    (3) edge (5)
    (3) edge (6)
    ;
\end{tikzpicture} & 
$\begin{Bmatrix}
0 & 1 & 3\\
4 & 2 & 4
\end{Bmatrix}$\\
\hline
\begin{tikzpicture}[scale=0.7]
    \vertex (1) at (1,1) {};
    \vertex (2) at (3,1) {};
    \vertex (3) at (4,2) {};
    \vertex (4) at (4,0) {};
    \vertex (5) at (0,2) {};
    \vertex (6) at (0,0) {};
    
    \path[-]
    (1) edge (5)
    (1) edge (6)
    (1) edge (2)
    (2) edge (3)
    (4) edge (2)
     ;
\end{tikzpicture} & $\begin{Bmatrix}
0 & 2 \\
4 & 6 
\end{Bmatrix}$
\\
\hline
\begin{tikzpicture}
    \vertex (1) at (2,0.5) {};
    \vertex (2) at (1,0) {};
    \vertex (3) at (2,0) {};
    \vertex (4) at (3,0.5){};
    \vertex (5) at (3,-0.5){};
    \vertex (6) at (3,0) {};
    
    \path[-]
    (1) edge (3)
    (2) edge (3)
    (3) edge (4)
    (3) edge (5)
    (3) edge (6)
    ;
\end{tikzpicture} &
$\begin{Bmatrix}
0 & 4 \\
5 & 5 
\end{Bmatrix}$
\\
\hline
\end{tabular}}
    \caption{}
    \label{tab:6vertices}
\end{table}

\begin{theorem} \cite[Theorem 3.2]{torres2020non}
\label{thm:nil}
Let $M$ be the edge adjacency matrix of a graph $X.$ Then $M$ is a nilpotent matrix if and only if $X$ is a forest.
\end{theorem}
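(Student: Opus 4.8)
The plan is to recognize $M$ as the adjacency matrix of a directed graph $D(M)$ on the $2m$ oriented edges $e_1,\dots,e_m,e_1^{-1},\dots,e_m^{-1}$, where there is an arc $e_i\to e_j$ precisely when $t(e_i)=s(e_j)$ and $e_j\neq e_i^{-1}$ (the condition $s(e_i)\neq t(e_j)$, under $t(e_i)=s(e_j)$, says exactly that $e_j$ is not the reversal of $e_i$); that is, an arc records that $e_i$ can be followed by $e_j$ in a non-backtracking walk. Since $M$ is a $0/1$ matrix, I would invoke the standard dictionary between nonnegative matrices and digraphs: such a matrix is nilpotent if and only if $D(M)$ is acyclic. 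Indeed, if $D(M)$ has no directed cycle one may topologically order the vertices so that $M$ becomes strictly triangular, whence $M^{2m}=0$; conversely a directed cycle of length $\ell$ forces a positive entry on the diagonal of $M^{\ell}$, so $\operatorname{tr}(M^{\ell})\ge 1$ and $M^{\ell}\neq 0$. Thus the theorem reduces to showing that $D(M)$ is acyclic if and only if $X$ is a forest.

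Next I would translate directed cycles of $D(M)$ into the geometry of $X$. By the arc condition, a directed closed walk $f_1\to f_2\to\cdots\to f_\ell\to f_1$ in $D(M)$ is the same datum as a closed non-backtracking walk in $X$: consecutive edges meet head-to-tail, $t(f_t)=s(f_{t+1})$, and the constraint $f_{t+1}\neq f_t^{-1}$ (read cyclically, so that it also applies at the seam joining $f_\ell$ back to $f_1$) is precisely the non-backtracking requirement. Hence $D(M)$ contains a directed cycle if and only if $X$ admits a closed non-backtracking walk.

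For the direction \emph{$X$ a forest $\Rightarrow M$ nilpotent}, a forest contains no cycle and therefore admits no closed non-backtracking walk, so $D(M)$ is acyclic and $M$ is nilpotent. (Alternatively, one may exhibit the strictly triangular form directly by orienting each tree component toward a fixed root and ordering oriented edges by distance to that root.) For the converse, if $X$ is not a forest it contains a cycle $v_1v_2\cdots v_g v_1$ with $g\ge 3$; orienting its edges consistently around the cycle yields oriented edges $f_1,\dots,f_g$ with $t(f_t)=s(f_{t+1})$ for all $t$ (indices modulo $g$), and because $g\ge 3$ the successor edge is never the reversal of its predecessor, so $f_1\to\cdots\to f_g\to f_1$ is a directed cycle in $D(M)$. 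Therefore $M^{g}\neq 0$ and $M$ is not nilpotent.

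I expect the only real subtlety to lie in the correspondence step: confirming that the non-backtracking condition is respected all the way around the closed walk, in particular at the wrap-around from $f_\ell$ to $f_1$, and noting that the hypothesis $g\ge 3$ (valid since $X$ is a simple graph, with no loops or parallel edges producing shorter closed walks) is exactly what guarantees the absence of backtracking at that seam. Once this bookkeeping is in place, the result follows from the standard nilpotent-if-and-only-if-acyclic characterization of digraph adjacency matrices.
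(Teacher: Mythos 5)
The paper does not prove this statement: it is quoted verbatim from \cite{torres2020non} (Theorem 3.2 there), so there is no in-paper argument to compare yours against. Your proof is correct and is the natural self-contained one: $M$ is the adjacency matrix of the non-backtracking digraph on the $2m$ oriented edges, a $0/1$ matrix is nilpotent exactly when its digraph is acyclic, directed closed walks in that digraph are tail-less closed non-backtracking walks in $X$, and such walks exist precisely when $X$ contains a cycle (the hypothesis that $X$ is simple gives girth at least $3$, which is what makes the wrap-around step non-backtracking, as you note). The one step you assert without justification is that a forest admits no closed non-backtracking walk; a closed non-backtracking walk need not be a cycle in general, so this deserves its one-line proof. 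For instance: along a closed walk $v_0,v_1,\dots,v_\ell=v_0$ of positive length in a tree, the distance $d(v_i)$ to a fixed root changes by $\pm 1$ at each step, so there is an index $i$ with $d(v_{i-1})=d(v_{i+1})=d(v_i)-1$; since $v_i$ has a unique neighbour strictly closer to the root, $v_{i-1}=v_{i+1}$ and the walk backtracks there. (Equivalently, your alternative triangularization works once you observe that in a tree a non-backtracking step from an edge directed away from the root must again be directed away from the root, so "distance of the head from the root" can only decrease while the walk heads rootward and then strictly increases forever, ruling out closure.) With that filled in, the argument is complete.
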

  
\begin{theorem}\label{thm:inteigen}
Let $X$ be a connected graph. Then the eigenvalues of $N$ are the same as the eigenvalues of $D$ if and only if $X$ is a tree.
\end{theorem}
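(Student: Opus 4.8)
The plan is to prove both implications through a single observation: the spectra of $N$ and $D$ coincide exactly when the directed graph $G_M$ built from the $0$--$1$ matrix $M$ is acyclic, and then to use Theorem \ref{thm:nil} to translate acyclicity of $G_M$ (equivalently, nilpotency of $M$) into ``$X$ is a forest,'' with connectedness upgrading ``forest'' to ``tree.'' Throughout I would record the two elementary facts that (i) $M$ has zero diagonal, so $D$ is precisely the diagonal part of $N$ and $N_{ij}=d_{ii}\delta_{ij}-M_{ij}$, and (ii) two real matrices have equal spectra (with multiplicity) if and only if all their power sums agree, i.e. $\operatorname{tr}(N^{k})=\operatorname{tr}(D^{k})$ for every $k$.

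For the direction ``$X$ a tree $\Rightarrow$ spectra agree,'' note a tree is a forest, so Theorem \ref{thm:nil} gives that $M$ is nilpotent. A nilpotent $0$--$1$ matrix is the adjacency matrix of an acyclic digraph and hence admits a topological ordering of its indices; relabelling the oriented edges by this ordering (a permutation similarity $\Pi(\cdot)\Pi^{T}$) makes $\Pi M\Pi^{T}$ strictly upper triangular while $\Pi D\Pi^{T}$ stays diagonal. Then $\Pi N\Pi^{T}=\Pi D\Pi^{T}-\Pi M\Pi^{T}$ is upper triangular with diagonal a permutation of that of $D$, so $\phi_{N}=\phi_{D}$ and the eigenvalues coincide.

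For the converse I would argue by contraposition: suppose $X$ is connected but not a tree, so it contains a cycle and in particular is not a forest; by Theorem \ref{thm:nil}, $M$ is not nilpotent. Since $M\ge 0$, there is a smallest integer $g\ge 1$ with $\operatorname{tr}(M^{g})>0$, and by minimality $G_M$ has no closed walk of positive length below $g$. Expanding $\operatorname{tr}(N^{g})$ as a signed sum over closed walks of length $g$, each step is either a ``loop'' contributing $d_{ii}$ or a ``move'' contributing $-M_{ij}$, and the move-steps of any single closed walk themselves form a closed walk in $G_M$, whose length is therefore $0$ or at least $g$. Hence only the all-loop walks (contributing $\operatorname{tr}(D^{g})$) and the all-move walks (contributing $(-1)^{g}\operatorname{tr}(M^{g})$) survive, giving $\operatorname{tr}(N^{g})=\operatorname{tr}(D^{g})+(-1)^{g}\operatorname{tr}(M^{g})\neq\operatorname{tr}(D^{g})$; the power sums differ, so the spectra cannot agree.

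The step I expect to be the crux is this combinatorial bookkeeping at the critical length $g$: ruling out every ``mixed'' closed walk that uses both loops and moves, so that the sole correction to $\operatorname{tr}(D^{g})$ is exactly $(-1)^{g}\operatorname{tr}(M^{g})$. I would also be careful to read ``same eigenvalues'' as equality of spectra with multiplicity, so that the power-sum comparison is legitimate; equivalently, one can phrase the whole converse as a comparison of $\det(I-xN)$ with $\det(I-xD)=\prod_i(1-xd_{ii})$ and locate the first nonvanishing coefficient, which occurs precisely at $x^{g}$.
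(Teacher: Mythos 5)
Your proof is correct, but it takes a genuinely different route from the paper's. For the forward direction the paper does not pass through Theorem \ref{thm:nil}: it compares the coefficients of $\phi_N$ and $\phi_D$ as sums of $k\times k$ principal minors, expanding each minor of $N$ by the permutation formula and observing that, because the digraph of $M$ is acyclic when $X$ is a tree, every non-identity permutation contributes zero, so each minor collapses to a product of diagonal entries. Your triangularization via a topological ordering is a cleaner packaging of the same acyclicity fact and delivers $\phi_N=\phi_D$ in one stroke. For the converse the paper argues directly rather than by contraposition: assuming $\phi_N=\phi_D$, it runs a strong induction showing every coefficient of $\phi_M$ vanishes, at each stage matching sums of principal minors of $N$ and $D$ and arguing that only full-cycle permutations survive; nilpotency of $M$ then gives a forest by Theorem \ref{thm:nil}. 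Your contrapositive instead locates the first $g$ with $\operatorname{tr}(M^g)>0$ and works with power sums rather than principal minors; the nonnegativity of $M$ turns the key cancellation (no mixed loop/move closed walk of length $g$) into a statement about individual walks rather than about signed sums of minors, which makes the critical step more transparent than the paper's corresponding induction. Both proofs ultimately rest on the same two pillars --- acyclicity of the digraph of $M$ for forests, and Theorem \ref{thm:nil} to convert the spectral condition on $M$ back into a structural condition on $X$ --- but yours trades determinant bookkeeping for trace bookkeeping. The only hypotheses your version leans on that deserve explicit mention are the ones you already flagged: $M_{ii}=0$ (so $D$ is exactly the diagonal part of $N$), and the fact that equality of $\operatorname{tr}(N^k)$ and $\operatorname{tr}(D^k)$ for all $k$ is equivalent to equality of spectra with multiplicity, which is Newton's identities in characteristic zero applied to two matrices of the same size $2m$.
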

  
  \begin{proof}
  Suppose that $X$ is a tree on $m$ edges. Let $\phi_D(\lambda)=\lambda^{2m}+d_1\lambda^{2m-1}+d_2\lambda^{2m-2}+\cdots+d_{2m-1}\lambda+d_{2m}$ and $
        \phi_N(\lambda)=\lambda^{2m}+n_1\lambda^{2m-1}+n_2\lambda^{2m-2}+\cdots+n_{2m-1}\lambda+n_{2m}$
    be the characterstic polynomial of matrix $D$ and $N,$ respectively. We want to show that $d_i=n_i$, where $1\leq i \leq 2m.$ In order to prove the claim, we use the fact that the sum of the products of the eigenvalues of a matrix $A$ taken $k$ at a time equals the sum of the $k\times k$ principal minors of $A$ \cite[p.53]{horn2012matrix}.
    It is simple to observe that $d_1=n_1.$ The $2\times 2$ principal submatrix of $N$ has the following structure,
    $$B_2=\begin{pmatrix}
    d_{ii} & -m_{ij}\\
    -m_{ji} & d_{jj}
    \end{pmatrix},$$ where $m_{ij}$ denotes the $ij^{th}$ entry of $M$.
    By the definition of $M$, $det(B_2)=d_{ii}d_{jj}$, therefore $d_2=n_2$. 
    Let $B_k$ denote the $k\times k$ principal submatrix of $N$, whose rows and columns are  indexed by $i_1>i_2>\ldots>i_k.$ It is well known that \begin{equation}\label{eqn:det}
    det(A)=\sum_{\sigma\in S_n}sgn(\sigma)\prod_{i=1}^{n}a_{i\sigma(i)},
    \end{equation}
    where the summation is over all permutation $\{1,2,\ldots,n\}$.
    As $X$ is a tree, the directed graph obtained from the matrix $M$ has no cycles. By (\ref{eqn:det}), $det(B_k)=d_{i_1i_1}d_{i_2i_2}\ldots d_{i_ki_k}$, which implies $d_k=n_k$.
    \par
    Conversely, suppose that $\phi_D(\lambda)=\phi_N(\lambda)$. Let $\phi_M(\lambda)=\lambda^{2m}+m_1\lambda^{2m-1}+m_2\lambda^{2m-2}+\ldots+m_{2m-1}\lambda+m_{2m}$. We want to show that $m_i=0\; \forall \;1\leq i \leq 2m.$ The result is proved using strong induction. Clearly, $m_1=m_2=0$. Assume that $m_i=0 \forall 1\leq i\leq k$. As $m_{k+1}=$ sum of $(k+1) \times (k+1)$ principal minors of $M$, by (\ref{eqn:det})
\begin{equation}\label{eqn:det1}
m_{k+1}=\sum_{i_1<i_2<\ldots<i_{k+1}}\sum_{\sigma \in S_{k+1}}sgn(\sigma)\prod_{\ell=1}^{k+1}m_{i_{\ell}\sigma(i_{\ell})},    
\end{equation}
 where $1\leq i_1,i_2,\ldots,i_{k+1}\leq 2m.$ In the inner summation of (\ref{eqn:det1}), all the entries corresponding to permutations which are not a cycle of length $k+1$ are zero, by induction hypothesis. Hence, we are left with the following expression
$$m_{k+1}=\sum_{i_1<i_2<\ldots<i_{k+1}}\sum_{\sigma \in C\cup \{e\}}(-1)^{k+1}m_{i_1\sigma(i_1)}m_{i_2\sigma(i_2)}\ldots m_{i_{k+1}\sigma(i_{k+1})},$$ where $C$ denotes the cycle of length $k+1$. As sum of $k\times k$ principal minors of $N$ is equal to the sum of $k\times k$ principal minors of $D$, we deduce the following expression
\begin{equation*}
\sum_{i_1<i_2<\ldots<i_{k+1}}\left(\prod_{j=1}^{k+1}n_{i_ji_j}+\sum_{\sigma\in S_{k+1},\sigma \neq \{e\}}sgn(\sigma)\prod_{\ell=1}^{k+1}n_{i_\ell\sigma(i_\ell)}\right)= \sum_{i_1<i_2<\ldots<i_{k+1}}\prod_{j=1}^{k+1}n_{i_ji_j}.    
\end{equation*}

By above Equation we obtain \begin{equation}\label{eqn:nil:2}
\sum_{i_1<i_2<\ldots<i_{k+1}}\sum_{\sigma \in S_{k+1},\sigma\neq \{e\}}sgn(\sigma)\prod_{\ell=1}^{k+1}n_{i_\ell\sigma(i_\ell)}=0.
\end{equation}
As $N=D-M$,
$n_{i_\ell \sigma(i_\ell)}=\begin{cases}
d_{i_\ell i_\ell} & \mbox{if $\sigma(i_\ell)=i_\ell$}\\
-m_{i_\ell\sigma(i_\ell)} & \mbox{if $\sigma(i_\ell)\neq i_\ell$}
\end{cases}$. In the inner summation of (\ref{eqn:nil:2}), all the terms in the permutations except those terms corresponding to full cycle permutation are zero, by induction hypothesis. Equation (\ref{eqn:nil:2}) reduces to
$$\sum_{i_1<i_2<\cdots<i_{k+1}}\sum_{\sigma \in C ,\sigma\neq \{e\}}(-1)^{k+1}(-1)^{k+1}m_{i_1\sigma(i_1)}m_{i_2\sigma(i_2)}\cdots m_{i_{k+1}\sigma(i_{k+1})}=0.$$
Hence $m_{k+1}=0$ and
by Theorem \ref{thm:nil}, the proof is complete.
\end{proof}

\begin{cor}
Let $X$ be a tree on the $n$ vertices and $(d_1,d_2,\ldots,d_n)$ be the degree sequence of $X$. Then
$d_i-1$ is an eigenvalue of $N$ with multiplicity $d_i$, where $1\leq i\leq n$.
\end{cor}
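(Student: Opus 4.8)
The plan is to deduce this directly from Theorem \ref{thm:inteigen}. Since $X$ is a tree, that theorem gives that $N$ and $D$ have the same characteristic polynomial, and hence the same spectrum (eigenvalues together with multiplicities). Because $D$ is diagonal, its eigenvalues are precisely its diagonal entries, namely the values $\deg(t(e_\ell))-1$ as $e_\ell$ ranges over all $2m=2(n-1)$ directed edges $e_1,\ldots,e_m,e_1^{-1},\ldots,e_m^{-1}$, each counted once. So the whole problem reduces to a bookkeeping question: for a fixed vertex $v$, how many of the $2m$ directed edges have $v$ as their terminal vertex?

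First I would record the edge-counting step. Each undirected edge incident to $v$, say $\{u,v\}$, produces exactly two oriented edges in our list, one directed from $u$ to $v$ and its reverse; of these, exactly one terminates at $v$. Hence the number of directed edges $e_\ell$ with $t(e_\ell)=v$ equals the number of undirected edges incident to $v$, which is $\deg(v)$. Consequently the value $\deg(v)-1$ occurs exactly $\deg(v)$ times among the diagonal entries of $D$.

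Combining the two observations and writing $d_i=\deg(v_i)$: the value $d_i-1$ appears $d_i$ times on the diagonal of $D$, so it is an eigenvalue of $D$ of multiplicity $d_i$; by Theorem \ref{thm:inteigen} it is therefore an eigenvalue of $N$ with that multiplicity. As a consistency check, summing these multiplicities over all vertices gives $\sum_{i=1}^n d_i = 2(n-1)=2m$, the order of $N$, confirming that these eigenvalues account for the entire spectrum.

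The only point requiring care, which I would flag as the main subtlety rather than a genuine obstacle, is the interpretation of the multiplicity when several vertices share the same degree. The statement is best read per vertex: vertex $v_i$ contributes $d_i$ copies of the eigenvalue $d_i-1$, and if two distinct vertices have a common degree $d$, then their contributions to the eigenvalue $d-1$ must be added. I would make this explicit so that the total multiplicity of a given eigenvalue $d-1$ is the sum of the degrees over all vertices of degree $d$, in agreement with the diagonal of $D$.
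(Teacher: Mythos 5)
Your argument is correct and is exactly the deduction the paper intends: the corollary is stated without proof as an immediate consequence of Theorem \ref{thm:inteigen}, relying on the observation that the diagonal of $D$ lists $\deg(t(e_\ell))-1$ over all $2m$ oriented edges and that exactly $\deg(v)$ of these terminate at a given vertex $v$. Your explicit counting and the remark about aggregating multiplicities for vertices of equal degree are a faithful (and slightly more careful) write-up of the same route.
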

We now define the Kronecker product of the matrices and the Schur complement as it is required to prove our next result.
The Kronecker product of the matrices $A=[a_{ij}]$ and $B$ is defined as the partitioned matrix $[a_{ij}B]$ and is denoted by $A\otimes B.$ Let $H$ be an $n\times n$ matrix partitioned as $H=\begin{bmatrix}
A_{11} & A_{12}\\
A_{21} & A_{22}
\end{bmatrix},$ where $A_{11}$ and $A_{22}$ are square matrices. If $A_{11}$ is non-singular, then the Schur complement of $A_{11}$ in $H$ is defined to be the matrix $A_{22}-A_{21}A_{11}^{-1}A_{12}.$ For Schur complement, we have
$det(H)=det(A_{11})det(A_{22}-A_{21}A_{11}^{-1}A_{12})$ and if $A_{11}A_{12}=A_{12}A_{11}$, then $det(H)=det(A_{22}A_{11}-A_{21}A_{12})$.
Similarly if $A_{22}$ is non-singular, then the Schur complement of $A_{22}$ in $H$ is defined to be the matrix $A_{11}-A_{12}A_{22}^{-1}A_{21}$ and we can obtain $det(H)=det(A_{22})det(A_{11}-A_{12}A_{22}^{-1}A_{21})$. If $A_{21}A_{22}=A_{22}A_{21}$, then $det(H)=det(A_{11}A_{22}-A_{12}A_{21})$. 

\begin{theorem}\label{thm:cbi}
Let $X=K_{p,q}$ be the complete bipartite graph on the $n$ vertices. Then the spectrum of $N(K_{p,q})$ is equal to  $$\begin{Bmatrix}
   0 & u & \frac{u\pm \sqrt{v+4}}{2} & \frac{u\pm \sqrt{v+4(1-q)}}{2} & \frac{u\pm \sqrt{v+4(1-p)}}{2}\\
  1 &1 & (q-1)(p-1) & p-1 & q-1
\end{Bmatrix},$$ where $u=p+q-2$ and $v=(p-q)^2$.
\end{theorem}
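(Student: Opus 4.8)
The plan is to exploit the bipartite block structure of $M$ together with the block determinant formula recalled just above. Since $K_{p,q}$ is bipartite, I would use the orientation from the proof of Theorem~\ref{thm:similar} and direct every edge from the $p$-side to the $q$-side, so that with $m=pq$ the matrix $M$ takes the form $M=\left[\begin{smallmatrix} 0 & B\\ C & 0\end{smallmatrix}\right]$. Every forward edge terminates at a vertex of the $q$-side, which has degree $p$, and every reversed edge terminates at a vertex of the $p$-side, which has degree $q$; hence $D=\left[\begin{smallmatrix}(p-1)\I_m & 0\\ 0 & (q-1)\I_m\end{smallmatrix}\right]$ and
$$N-\lambda\I_{2m}=\begin{bmatrix}(p-1-\lambda)\I_m & -B\\ -C & (q-1-\lambda)\I_m\end{bmatrix}.$$

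Next I would apply the determinant identity for the Schur complement. Because the diagonal blocks are scalar multiples of $\I_m$ they commute with everything, so the formula $\det(H)=\det(A_{22}A_{11}-A_{21}A_{12})$ yields
$$\det(N-\lambda\I_{2m})=\det\big((p-1-\lambda)(q-1-\lambda)\I_m-CB\big).$$
Thus each eigenvalue $\mu$ of $CB$ produces the pair of eigenvalues of $N$ solving $(p-1-\lambda)(q-1-\lambda)=\mu$, namely $\lambda=\tfrac{u\pm\sqrt{v+4\mu}}{2}$, once one records the identity $u^2-4(p-1)(q-1)=(p-q)^2=v$, and the multiplicity of each such pair equals that of $\mu$.

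The crux is therefore to compute $CB$. Indexing both the forward and the reversed edges by the vertex pairs $(i,j)$ with $1\le i\le p$ and $1\le j\le q$, a direct check of the edge-adjacency rule shows that $e_{ij}$ feeds exactly the reversed edges $e_{i'j}^{-1}$ with $i'\ne i$, while $e_{ij}^{-1}$ feeds exactly the forward edges $e_{ij'}$ with $j'\ne j$. Multiplying the corresponding blocks then gives $(CB)_{(i,j),(i',j')}=[i\ne i'][j\ne j']$, that is
$$CB=(J_p-\I_p)\otimes(J_q-\I_q),$$
where $J_k$ denotes the $k\times k$ all-ones matrix. I expect this bookkeeping to be the main obstacle, since it requires tracking precisely which directed edges are adjacent in the digraph of $M$ and then verifying that the single surviving term in each entry of the product reassembles into a Kronecker product.

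Finally I would read off the spectrum of $CB$ from the Kronecker structure: since $J_k-\I_k$ has eigenvalues $k-1$ (simple) and $-1$ (multiplicity $k-1$), the eigenvalues of $CB$ are $(p-1)(q-1)$ once, $1$ with multiplicity $(p-1)(q-1)$, $1-q$ with multiplicity $p-1$, and $1-p$ with multiplicity $q-1$. Substituting $\mu=(p-1)(q-1)$ gives $\sqrt{v+4\mu}=u$ and hence $\lambda\in\{0,u\}$, each simple; the remaining three values $\mu=1,\,1-q,\,1-p$ give the three pairs $\tfrac{u\pm\sqrt{v+4}}{2}$, $\tfrac{u\pm\sqrt{v+4(1-q)}}{2}$, $\tfrac{u\pm\sqrt{v+4(1-p)}}{2}$ with the stated multiplicities, and a quick count confirms these exhaust all $2pq$ eigenvalues.
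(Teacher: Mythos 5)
Your proposal is correct and follows essentially the same route as the paper: the same bipartite orientation, the same block form of $N$ with scalar diagonal blocks $(p-1)\I_m$ and $(q-1)\I_m$, and the same Schur-complement reduction to a $pq\times pq$ determinant involving $A(K_p)\otimes A(K_q)$, followed by solving $(p-1-\lambda)(q-1-\lambda)=\mu$ for each eigenvalue $\mu$. The only difference is in the last step, where you read off the spectrum of $CB=A(K_p)\otimes A(K_q)$ directly from the Kronecker product structure, while the paper writes this matrix as $\J_p\otimes A(K_q)-\I_p\otimes A(K_q)$ and constructs explicit eigenvectors; both give the same factorization of $\phi_N$ and the stated spectrum.
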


\begin{proof}
Let $V(K_{p,q})=\{v_1,v_2,\ldots,v_p\}\cup \{v_1^\prime,v_2^\prime,\ldots,v_q^\prime\}$ be a bipartition of $X$. Choosing an orientation that is defined in the proof of Theorem \ref{thm:similar}, then $N(K_{p,q})$ can be written in the following form
$$N(K_{p,q})=\begin{bmatrix}
(p-1)(U) &  \J_p\otimes(-\I_q)+U\\
V & (q-1)(U)
\end{bmatrix},$$ where $U=\I_p \otimes \I_q,V=\I_p\otimes(-A(K_q)) ,$ and $\J_n$ denotes the matrix of order $n$ with all entries one.
The characterstic polynomial of $N(X)$ is $$\phi_{N}(x)=\begin{vmatrix}
(p-1-x)(U) &  \J_p\otimes(-\I_q)+U\\
V & (q-1-x)(U)
\end{vmatrix}=0.$$
A simple check shows that $$V((q-1-x)(U))=((q-1-x)(U))V=\I_p\otimes((x-q+1)A(K_q)).$$
By the Schur complement formula, we obtain the following Equation
\begin{equation*}
\begin{split}
\phi_N(x)&=det(((p-1-x)U)((q-1-x)U)-(\J_p\otimes (-\I_q) +U)V)\\
&=det((p-1-x)(q-1-x)(U)-\J_p\otimes A(K_q)-V).\\
\end{split}
\end{equation*}
For the sake of convenience, assume that $$S^\prime=(p-1-x)(q-1-x)(U)-\J_p\otimes A(K_q)-V,$$ $$T^\prime=(1-p)A(K_q)+(p-x-1)(q-x-1)\I_q,U^\prime=A(K_q)+(p-x-1)(q-x-1)\I_q.$$ Let $\lambda$ be an eigenvalue of $T^\prime$ with its corresponding eigenvector $v$, then $S^\prime V=\lambda V,$ where $V$ is a $p$ dimensional vector $(v,v,\ldots,v)^T$.\\
Let $\lambda^\prime$ be an eigenvalue of $U^\prime$ with corresponding eigenvector $v^\prime$. By a straightforward calculation, one can see that the following linearly independent vectors $$\{(v^\prime,-v^\prime,0,0,\ldots,0)^T,
(v^\prime,0,-v^\prime,0,\ldots,0)^T,\ldots,(v^\prime, 0,0,0,\ldots,-v^\prime)^T
\}$$ are the eigenvectors of $S^\prime$. As the sum of the multiplicities of the obtained eigenvalues equals the number of vertices in the directed graph obtained from $M$. Therefore,
$\phi_N(x)=(det(U^\prime))^{p-1}det(T^\prime).$
As $A(K_q)=\J_q-\I_q$ and the eigenvalues of $\J_q$ are $0$ and $q$ with multiplicity $q-1$ and $1$, respectively. From here the eigenvalues of $N$ can be deduced.
\end{proof}
The following result is about the eigenvalues of symmetric block matrices that will be used to prove the next result.

\begin{theorem} \cite{davis2013circulant}\label{thm:unionspectra}
 Let $H=\begin{bmatrix}
A^\prime & B^\prime\\
B^\prime & A^\prime
\end{bmatrix}$ be a symmetric $2\times 2$ block matrix, where $A^\prime$ and $B^\prime$ are square matrices of same order. Then the spectrum of $H$ is the union of the spectra of $A^\prime+B^\prime$
and $A^\prime-B^\prime$.
\end{theorem}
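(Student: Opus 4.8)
The plan is to exhibit a single orthogonal change of basis that simultaneously block-diagonalizes $H$, after which the conclusion follows from the similarity-invariance of the spectrum. Write $n$ for the common order of $A^\prime$ and $B^\prime$, and let $\I$ denote the identity matrix of order $n$.

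First I would introduce the $2n\times 2n$ matrix
$$U=\frac{1}{\sqrt{2}}\begin{bmatrix} \I & \I\\ \I & -\I\end{bmatrix},$$
and check that it is both symmetric and orthogonal; indeed $U=U^T$ and $U^2=\I_{2n}$, so $U^{-1}=U$. This is the only genuinely creative step: one has to guess that the symmetric ``block-circulant'' pattern of $H$ is diagonalized by the block analogue of the $2\times 2$ Hadamard matrix.

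Next I would carry out the block multiplication $U H U$. A direct computation (using $U^{-1}=U$) gives
$$U H U=\begin{bmatrix} A^\prime+B^\prime & 0\\ 0 & A^\prime-B^\prime\end{bmatrix},$$
so that $H$ is orthogonally similar to the block-diagonal matrix on the right. Finally, since similar matrices share the same characteristic polynomial, and the characteristic polynomial of a block-diagonal matrix factors as the product of the characteristic polynomials of its diagonal blocks, the spectrum of $H$ equals the union, counted with multiplicities, of the spectra of $A^\prime+B^\prime$ and $A^\prime-B^\prime$, as claimed.

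There is no real obstacle here beyond correctly tracking the signs in the block product. I would also remark that symmetry of $A^\prime$ and $B^\prime$ is not actually needed for the block-diagonalization itself: the identity $U H U=\mathrm{diag}(A^\prime+B^\prime,\,A^\prime-B^\prime)$ holds for any square matrices $A^\prime,B^\prime$ of equal order; the symmetry hypothesis only serves to guarantee that $H$ and the two diagonal blocks are symmetric, hence have real spectra.
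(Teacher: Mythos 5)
Your argument is correct. The paper does not prove this statement at all --- it is quoted as a known result from Davis's \emph{Circulant Matrices} --- so there is no internal proof to compare against; your conjugation by the block Hadamard matrix $U=\tfrac{1}{\sqrt 2}\bigl(\begin{smallmatrix}\I & \I\\ \I & -\I\end{smallmatrix}\bigr)$ is the standard derivation, the block computation $UHU=\mathrm{diag}(A^\prime+B^\prime,\,A^\prime-B^\prime)$ checks out, and your closing remark that the symmetry hypothesis is not needed for the block-diagonalization is accurate (and in fact relevant here, since the matrix to which the paper applies this theorem in Theorem~\ref{thm:coverN} has blocks that need not be symmetric).
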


\begin{theorem}\label{thm:coverN}
Let $X$ be a connected graph, then $\phi_{N(X)}$ divides $\phi_{N(X^{\prime\prime})}$.
\end{theorem}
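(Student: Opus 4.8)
The plan is to exploit the $\mathbb{Z}/2$-covering structure of $X''=X\times K_2$ in order to put $N(X'')$ into the two-block form handled by Theorem~\ref{thm:unionspectra}. Write $V(X'')=V(X)\times\{0,1\}$, and recall that $(u,a)$ and $(v,b)$ are adjacent in $X''$ exactly when $u\sim v$ in $X$ and $a\ne b$. Thus every directed edge $f$ of $X$ lifts to two directed edges of $X''$, which I would index by pairs $(f,a)$ with $a\in\{0,1\}$, where $(f,a)$ runs from $(s(f),a)$ to $(t(f),1-a)$. A short check gives $(f,a)^{-1}=(f^{-1},1-a)$, so this indexing respects edge reversal, and the $4m$ directed edges of $X''$ split into a level-$0$ block and a level-$1$ block of size $2m$ each.

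The key computation is to read off $M(X'')$ and $D(X'')$ in this ordering. The edge $(f,a)$ feeds into $(g,b)$ in $X''$ iff $t(f)=s(g)$, $s(f)\ne t(g)$, and $b=1-a$; the first two conditions say precisely that $f$ feeds into $g$ in $X$, while the last forces a level switch. Hence the two diagonal blocks of $M(X'')$ vanish and both off-diagonal blocks equal $M=M(X)$, giving
$$M(X'')=\begin{bmatrix}0 & M\\ M & 0\end{bmatrix}.$$
Since a covering map preserves degrees, the diagonal entry of $D(X'')$ at $(f,a)$ is $\deg(t(f))-1$, independent of $a$, so $D(X'')=\begin{bmatrix} D & 0\\ 0 & D\end{bmatrix}$ with $D=D(X)$. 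Subtracting,
$$N(X'')=\begin{bmatrix} D & -M\\ -M & D\end{bmatrix}.$$

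To finish, I would conjugate by $P=\begin{bmatrix}\I_{2m} & \I_{2m}\\ \I_{2m} & -\I_{2m}\end{bmatrix}$, the similarity underlying Theorem~\ref{thm:unionspectra}, which yields $P^{-1}N(X'')P=\begin{bmatrix} D-M & 0\\ 0 & D+M\end{bmatrix}$. Therefore $\phi_{N(X'')}=\phi_{D-M}\,\phi_{D+M}=\phi_{N(X)}\,\phi_{D+M}$, so $\phi_{N(X)}$ divides $\phi_{N(X'')}$, with cofactor the characteristic polynomial of the signless edge Laplacian $D+M$.

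The main obstacle is the bookkeeping in the second step: one must fix the lift-and-reversal convention carefully enough that the level-switching phenomenon becomes visible, since it is precisely the parity constraint $b=1-a$ that kills the diagonal blocks and produces the clean anti-block-diagonal form of $M(X'')$. A secondary point worth flagging is that $N(X'')$ is not symmetric, so Theorem~\ref{thm:unionspectra} does not apply verbatim; however the factorization $\phi_H=\phi_{A'+B'}\,\phi_{A'-B'}$ for $H=\begin{bmatrix}A' & B'\\ B' & A'\end{bmatrix}$ follows from the same similarity $P$ without any symmetry hypothesis, which is all that is required here.
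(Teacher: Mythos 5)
Your proposal is correct and takes essentially the same route as the paper: both reduce to showing that $N(X^{\prime\prime})$ is permutation-similar to $\left[\begin{smallmatrix} D & -M\\ -M & D\end{smallmatrix}\right]$ and then split off $D-M=N(X)$ and $D+M$ via the similarity underlying Theorem~\ref{thm:unionspectra}, the only difference being that you derive the anti-block-diagonal form of $M(X^{\prime\prime})$ directly from the covering structure, whereas the paper imports it from Horton as $\left[\begin{smallmatrix} 0 & MJ\\ JM & 0\end{smallmatrix}\right]$ (your labelling is obtained from that one by relabelling the level-one lifts via $J$) and then conjugates by a permutation. Your closing observation that Theorem~\ref{thm:unionspectra} must be invoked without its symmetry hypothesis is apt, since the paper likewise applies it to a non-symmetric matrix.
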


\begin{proof}
We know that $N(X)=D(X)-M(X)=\begin{pmatrix}
D_1-A & -B\\
-C & D_2-D
\end{pmatrix}$. From the proof of Theorem 4.2 in Horton~\cite{horton2006ihara}, $M(X^{\prime\prime})=\begin{pmatrix}
0 & MJ\\
JM & 0
\end{pmatrix},$ therefore $$N(X^{\prime\prime})=D(X^{\prime\prime})-M(X^{\prime\prime})=\begin{pmatrix}
D_1 & 0 & -B & -A\\
0 & D_2 & -D & -C\\
-C & -D & D_2 & 0\\
-A & -B & 0 & D_1
\end{pmatrix}
.$$ Note that $P^TN(X^{\prime\prime})P=\begin{pmatrix}
D_1 & 0 & -A & -B\\
0 & D_2 & -C & -D\\
-A & -B & D_1 & 0\\
-C & -D & 0 & D_2
\end{pmatrix},$ where $P=\begin{pmatrix}
I_m & 0 & 0 & 0\\
0 & I_m & 0 & 0\\
0 & 0 & 0 & I_m\\
0 & 0 & I_m & 0
\end{pmatrix}.$
The result follows from Theorem \ref{thm:unionspectra}.
\end{proof}

\section{Acknowledgement}
The authors would like to thank the handling editor and the anonymous reviewers for their careful reading of the manuscript.

\end{document}